\documentclass[a4paper,leqno,12pt]{amsart}
\setlength{\textheight}{8.8in}
\setlength{\topmargin}{-.1in}
\parskip=.05in
\usepackage{amsfonts,amssymb,verbatim,amsmath,amsthm,latexsym,textcomp,amscd}
\usepackage{latexsym,amsfonts,amssymb,epsfig,verbatim}
\usepackage{amsmath,amsthm,amssymb,latexsym,graphics,textcomp}
\usepackage{graphicx}
\usepackage{color}
\usepackage{tikz}
\usepackage{url}
\usepackage{enumerate}
\usepackage[mathscr]{euscript}
\usepackage{txfonts}
\usepackage{euscript}
\usepackage[all]{xy}
\usepackage{epic,eepic}\setlength{\unitlength}{.4mm}
\input xy
\xyoption{all}

\raggedbottom
\tolerance=9000
\hbadness=10000
\hfuzz=1.5pt
\setcounter{secnumdepth}{3}



\setlength{\textwidth}{5.3in}

\usepackage{hyperref}




\theoremstyle{plain}
\newtheorem{theorem}{Theorem}[section]

\newtheorem{lemma}[theorem]{Lemma}
\newtheorem{cor}[theorem]{Corollary}
\newtheorem{prop}[theorem]{Proposition}

\newtheorem{example}[theorem]{Example}
\newtheorem{remark}[theorem]{Remark}

\theoremstyle{definition}
\newtheorem{defn}[theorem]{Definition}
\newtheorem{rmk}[theorem]{Remark}
\newtheorem{exam}[theorem]{Example}



\begin{document}
\title{Cup-product in Hom-Leibniz cohomology and Hom-Zinbiel algebras}
\author{Ripan Saha}
\email{ripanjumaths@gmail.com}
\address{Department of Mathematics,
Raiganj University, Raiganj, 733134,
West Bengal, India.}

\subjclass[2010]{17A30, 17A32, 17D99, 17B55.}
\keywords{Hom-Leibniz algebra, Cup-product, Hom-Zinbiel algebra.}
\begin{abstract}
We define a cup-product in Hom-Leibniz cohomology and show that the cup-product satisfies the graded Hom-Zinbiel relation.
\end{abstract}
\maketitle
\section{Introduction}
The concept of Hom-Lie algebras was introduced by Hartwig, Larsson and Silvestrov in the study of algebraic structures describing some q-deformations of the Witt and the Virasoro algebras \cite{HLS}. Hom-Leibniz algebras was introduced by Makhlouf and Silvestrov generalizing the notion of Leibniz algebras \cite{MS08}. Hom-Leibniz algebra is obtained by twisting the Leibniz identity by a self linear map $\alpha$. A Hom-Leibniz algebra is same as a Leibniz algebra when $\alpha$ is the identity map.

There is a cup-product on the graded Hochschild cohomology $H^\ast (A,~A)$ of an associative algebra $A$ and with this cup-product $H^\ast (A,~A)$ carries a Gerstenhaber algebra structure \cite{G1}. A cup-product and Gerstenhaber algebra on Hom-associative algebras have been studied in \cite{AD}. Loday defined a cup-product on the graded cohomology groups of the Leibniz algebras \cite{L3}. The cup-product on the cohomology satisfies the graded Zinbiel algebra relation. A cup-product on the equivariant cohomology of Leibniz algebra equipped with an action of a finite group has been studied in \cite{MS}.

Exploring the similarities and dissimilarities between the category of Leibniz algebras and the larger category of Hom-Leibniz algebras is an interesting theme in the literature. This paper contributes to the development of the non-associative algebra theory in this direction. The main purpose of this paper is to extend Loday\rq s result to Hom-Leibniz algebra case. We introduce cohomology for Hom-Leibniz algebra $(L,[.,.],\alpha)$ with coefficients in a Hom-associative, commutative algebra $(A,\mu,\alpha_1)$ and define a cup-product on the graded cohomology $HL^\ast _{\alpha, \alpha_1}(L;~A)$ of the Hom-Leibniz algebra $L$
\begin{align*}
\cup : HL^n_{\alpha, \alpha_1}(L;~A)\times HL^m_{\alpha, \alpha_1}(L;~A)\rightarrow HL^{n+m}_{\alpha, \alpha_1}(L;~A),
\end{align*}
which satisfies the following graded algebra relation:
\begin{align*}
\alpha_1(a\cup b)\cup c= a\cup\alpha_1(b\cup c)+(-1)^{|b||c|}a\cup\alpha_1(c\cup b),
\end{align*}
where $a\in HL^n_{\alpha, \alpha_1}(L;~A),~b\in HL^m_{\alpha, \alpha_1}(L;~A),~c\in HL^r_{\alpha, \alpha_1}(L;~A)$.

\section{Preliminaries}
In this section, we recall some definitions and basic facts of Hom-algebras and shuffle algebras. In this paper, $k$ always denotes any field.
\begin{defn}\label{hom-ass-defn}
	A Hom-associative algebra over $k$ is a triple $(A, \mu, \alpha_1)$ consists of a $k$-vector space $A$ together with a $k$-bilinear map
	$\mu : A \times A \rightarrow A$ and a $k$-linear map $\alpha_1 : A \rightarrow A$ satisfying $\alpha_1 (\mu (x,y)) = \mu(\alpha_1 (x), \alpha_1 (y))$ and
	\begin{align*}\label{hom-ass-cond}
	\mu ( \alpha_1 (x) , \mu ( y , z) ) = \mu ( \mu (x , y) , \alpha_1 (z)), ~~ \text{ for all } x, y, z \in A.
	\end{align*}
\end{defn}
\begin{exam} \label{hom ass exam}
Let $A$ be a two dimensional vector space with basis $\{a_1, a_2\}$. Define a multiplication $\mu : A \times A \rightarrow A$ by
$$ \mu (a_i, a_j) = \begin{cases} a_1 & \mbox{ if } (i,j) = (1,1)\\
a_2 & \mbox{ if } (i,j) \neq (1,1).
\end{cases} $$
A linear map $\alpha_1 :A \rightarrow A$ is defined by $\alpha_1 (a_1) = a_1 - a_2$ and $\alpha_1 (a_2) = 0$. Then the triple $(A, \mu, \alpha_1)$ defines a Hom-associative algebra structure.
\end{exam}
\begin{defn}
A Hom-Lie algebra is a triple $(A, \mu, \alpha)$
consisting of a linear space $A$, a skew-bilinear map $\mu: V\times V \rightarrow V$ and
 a linear space homomorphism $\alpha: A \rightarrow A$
 satisfying the Hom-Jacobi identity
$$
 \circlearrowleft_{x,y,z}{\mu(\mu(x,y),\alpha(z))}=0,
$$
for all $x, y, z$ in $A$, where
$\circlearrowleft_{x,y,z}$ denotes summation over
the cyclic permutations on $x,y,z$.
\end{defn}

A Hom-Lie algebra whose endomorphism $\alpha$ is the identity is a Lie algebra.
\begin{defn}
A Hom-Leibniz algebra is a $k$-linear vector space $L$ equipped with a $k$-bilinear map $[.,.]:L\times L\to L$ and a $k$-linear map $\alpha:L\to L$ satisfying the identity:
\begin{center}
$[\alpha(x),[y,z]]=[[x,y],\alpha(z)]-[[x,z],\alpha(y)].$
\end{center}
\end{defn}
We denote a Hom-Leibniz algebra as $(L,[.,.],\alpha)$. A Hom-Leibniz algebra $(L,[.,.],\alpha)$ is called multiplicative if it satisfies $[\alpha(x),\alpha(y)]=\alpha([x,y])$.

A homomorphism between Hom-Leibniz algebras $(L_1,[.,.]_1,\alpha_1)$ and $(L_2,[.,.]_2,\alpha_2)$ is a $k$-linear map $\phi: L_1\to L_2$ which satisfies $\phi([x,y]_1)=[\phi(x),\phi(y)]_2$ and $\phi\circ\alpha_1=\alpha_2\circ\phi$.
\begin{example}
Any Hom-Lie algebra is a Hom-Leibniz algebra as in the presence of skew-symmetry Hom-Jacobi identity is same as Hom-Leibniz identity.
\end{example}
\begin{example}
Given a Leibniz algebra $(L,[.,.])$ and a Leibniz algebra homomorphism $\alpha: L\to L$, one always get a Hom-Leibniz algebra $(L,[.,.]_\alpha,\alpha)$, where $[x,y]_\alpha= [\alpha(x), \alpha(y)]$.
\end{example}

\begin{example}
Let $(L, d)$ be a differential Lie algebra with the Lie bracket $[.,.]$ and $\alpha : L \to L$ is an endomorphism such that
\begin{enumerate}
\item $\alpha [x, y] = [\alpha(x), \alpha(y)],$
\item $\alpha \circ d = d \circ \alpha.$
\end{enumerate}
Then $L$ is a Hom-Leibniz algebra with the new bracket defined as
$$[x, y]_{d,\alpha} := [\alpha(x), d \alpha(y)].$$
\end{example}

\begin{example} \label{exam fd}
Let $L$ is a two-dimensional $\mathbb{C}$-vector space with basis $\lbrace e_1, e_2\rbrace$. We define a bracket as $[e_2,e_2]=e_1$ and zero else where and the endomorphism is given by the matrix \[
\alpha=
  \begin{bmatrix}
    1 & 1  \\
    0 & 1 
  \end{bmatrix}.
\]
It is easy to check that $(L, [.,.],\alpha)$ is a Hom-Leibniz algebra which is not Hom-Lie.
\end{example}
\begin{defn}
A Hom-Zinbiel algebra is a triple $(R, (), \alpha_1)$ consisting of a vector space $R$ together with two $k$-linear maps
\begin{align*}
(): R\otimes R\to R~~~\text{and}~~~ \alpha_1: R\to R,
\end{align*}
satisfying the relation
\begin{align}
((xy)\alpha_1(z))=(\alpha_1(x)(yz))+(\alpha_1(x)(zy)),
\end{align}
for all $x,y,z\in R$.
\end{defn}
For a graded vector space $R$, we can define a notion of a graded Hom-Zinbiel algebra, satisfying the following graded relation
\begin{align}
((xy)\alpha_1(z))=(\alpha_1(x)(yz))+(-1)^{|y||z|}(\alpha_1(x)(zy)).
\end{align}
\subsection{Shuffle algebra}
  Let $S_p$ be the permutation group of $p$ elements $1, \ldots, p.$ A permutation $\sigma \in S_p$ is called a $(n, m)$-shuffle if $n+m = p$ and 
$$\sigma (1) < \cdots <\sigma (n) ~~\mbox{and}~~~\sigma (n+1) < \cdots <\sigma (n+m).$$ 
In the group algebra $k [S_p],$ let $sh_{n,m}$ be the element $$sh_{n,m}: = \sum_{\sigma} \sigma,$$ where the summation is over all $(n, m)\mbox{-shuffles}.$

For any vector space $V$ we let $\sigma \in S_p$ act on $V^{\otimes p}$ by
$$\sigma(v_1\ldots v_p) = (v_{\sigma^{-1}(1)}\ldots v_{\sigma^{-1}(p)}),$$ where the generator $v_1\otimes \cdots \otimes v_p$ of $V^{\otimes p}$ is denoted by $v_1\ldots v_p.$

Note that the linear map from $k [S_p]$ to itself induced by $\sigma \mapsto \mbox{sgn}(\sigma)\sigma^{-1}$ for $\sigma \in S_p$ is an anti-homomorphism. Let us denote the image of $\alpha \in k [S_p]$  under this map by $\tilde{\alpha}.$ 
For any non-negative integers $n$ and $m$ we define 
\begin{align}\label{equivariant-linear-map}
\rho_{n,m}: = 1_1\otimes \widetilde{sh}_{n-1,m} = L^{\otimes n+m} \rightarrow L^{\otimes n+m},
\end{align}

is given by
\begin{align}\label{explicit-above-map}
\rho_{n,m}(x_1,\ldots, x_{n+m})= \sum_\sigma \mbox{sgn}(\sigma)(x_1,x_{\sigma (2)},\ldots, x_{\sigma (n+m)}),
\end{align}
where the above sum is over all $(n-1, m)$-shuffles $\sigma$. 

For the generators $b=b_1\ldots b_m,~ c=c_1\ldots c_r$. Suppose $\tau_{m,r}$ is a permutation such that
\begin{align}\label{tau}
\tau_{r,m}(cb)=bc.
\end{align}
\begin{rmk}  
 For non-negative integers $n,~~m,~~r,$ under the anti-homomorphism $\alpha\to \tilde{\alpha}$ we have the following equality 
\begin{align}\label{relation-required-for-proof}
(\rho_{n,m}\otimes  1_r) \circ \rho_{n+m, r} = ( 1_n\otimes \rho_{m,r} + (-1)^{rm}\circ \tau_{r,m} \circ \rho_{r,m})\circ \rho_{n,m+r}
\end{align}
(See Proposition (1.8) of \cite{L3})
\end{rmk}
\section{Some results on Hom-Leibniz and Hom-Zinbiel algebras}
In this section, we prove some results regarding Hom-Leibniz and Hom-Zinbiel algebras along the same line as Leibniz and Zinbiel algebras.
\begin{prop}
Suppose $(L,[.,.],\alpha)$ is a Hom-Leibniz algebra and $(R, (), \alpha_1)$ is a Hom-Zinbiel algebra. Then the tensor product $L\otimes R$ together with the bracket and $k$-linear map
\begin{align*}
&[x\otimes r, y\otimes s]=[x,y]\otimes (rs)-[y,x]\otimes (sr),\\
&\alpha\otimes \alpha_1: L\otimes R \to L\otimes R,\,\,\, (\alpha\otimes \alpha_1)(x\otimes r)=\alpha(x)\otimes \alpha_1(r),
\end{align*}
is a Hom-Lie algebra.
\end{prop}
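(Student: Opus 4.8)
The plan is to verify the two defining conditions of a Hom-Lie algebra for the triple $(L\otimes R,\,[.,.],\,\alpha\otimes\alpha_1)$: skew-symmetry of the bracket and the Hom-Jacobi identity. Skew-symmetry is immediate, since interchanging the two arguments in
\begin{align*}
[x\otimes r,\, y\otimes s]=[x,y]\otimes (rs)-[y,x]\otimes (sr)
\end{align*}
merely swaps the two summands up to an overall sign, giving $[x\otimes r,\, y\otimes s]=-[y\otimes s,\, x\otimes r]$. Hence the entire content of the proposition lies in the Hom-Jacobi identity, and the strategy is to play the Hom-Leibniz identity on $L$ off against the Hom-Zinbiel identity on $R$. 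Note that the definition of Hom-Lie algebra used here demands only skew-symmetry and Hom-Jacobi, so no multiplicativity of $\alpha\otimes\alpha_1$ needs to be checked.

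First I would fix three generators $X=x\otimes r$, $Y=y\otimes s$, $Z=z\otimes t$ and expand the cyclic sum $\circlearrowleft_{X,Y,Z}[[X,Y],(\alpha\otimes\alpha_1)(Z)]$. Since $[X,Y]$ is a difference of two elementary tensors and $(\alpha\otimes\alpha_1)(Z)=\alpha(z)\otimes\alpha_1(t)$, each of the three cyclic terms splits into four, producing twelve summands, each an elementary tensor whose $L$-factor is either of the shape $[[\,\cdot\,,\,\cdot\,],\alpha(\cdot)]$ or of the shape $[\alpha(\cdot),[\,\cdot\,,\,\cdot\,]]$. The next step is to normalize every inner bracket of the second type via the Hom-Leibniz identity $[\alpha(x),[y,z]]=[[x,y],\alpha(z)]-[[x,z],\alpha(y)]$, so that after this rewriting all twelve $L$-factors are of the uniform form $[[a,b],\alpha(c)]$.

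I would then regroup the twelve terms according to their $L$-component. Exactly six double brackets occur, namely $[[x,y],\alpha(z)]$, $[[y,x],\alpha(z)]$, $[[y,z],\alpha(x)]$, $[[z,y],\alpha(x)]$, $[[z,x],\alpha(y)]$ and $[[x,z],\alpha(y)]$, and each of them is collected with precisely three $R$-coefficients. For instance $[[x,y],\alpha(z)]$ is multiplied by
\begin{align*}
((rs)\alpha_1(t))-(\alpha_1(r)(st))-(\alpha_1(r)(ts)),
\end{align*}
which vanishes by the Hom-Zinbiel relation $((rs)\alpha_1(t))=(\alpha_1(r)(st))+(\alpha_1(r)(ts))$. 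The remaining five groups cancel in the same manner, each against the instance of the Hom-Zinbiel relation obtained by the appropriate relabelling of $r,s,t$. As all six $R$-coefficients vanish, the cyclic sum is zero and the Hom-Jacobi identity holds.

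I expect the main obstacle to be purely organizational: keeping the signs and the two simultaneous cyclic relabellings (of the $L$-entries $x,y,z$ and the $R$-entries $r,s,t$) consistent across all twelve terms, and matching each collected $R$-coefficient to the correctly permuted copy of the Hom-Zinbiel identity. Once the bookkeeping is arranged by sorting on the double-bracket $L$-factor, the cancellation is forced term by term, mirroring the classical Leibniz-against-Zinbiel computation.
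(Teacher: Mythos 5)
Your proposal is correct and follows essentially the same route as the paper: expand the Hom-Jacobi sum on elementary tensors, use the Hom-Leibniz identity to rewrite every $L$-factor of the form $[\alpha(\cdot),[\cdot,\cdot]]$ into the normal form $[[\cdot,\cdot],\alpha(\cdot)]$, collect the $R$-coefficients of each double bracket, and kill them with the Hom-Zinbiel relation; your coefficient $((rs)\alpha_1(t))-(\alpha_1(r)(st))-(\alpha_1(r)(ts))$ for $[[x,y],\alpha(z)]$ matches the paper's computation up to sign and the factor of $2$ coming from the paper's use of the full alternating $S_3$ sum rather than the cyclic sum. The only cosmetic slip is the claim that twelve terms remain after normalization (the Hom-Leibniz rewriting splits six of them into two each), which does not affect the argument.
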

\begin{proof}
Skew-symmetry part is trivial. Hom-Jacobi identity is same as the following relation
\begin{align*}
\sum_{\sigma \in S_3} sgn(\sigma)\sigma[\alpha(x)\otimes \alpha_1(r),[y\otimes s, z\otimes t]]=0,
\end{align*}
here permutation group $S_3$ is acting same way on both sets $x,y,z$ and $r,s,t$. We have,
\begin{align*}
\sum_{\sigma\in S_3}sgn(\sigma)\sigma &\big([\alpha(x),[y,z]]\otimes (\alpha_1(r)(st))-[[y,z],\alpha(x)]\otimes \big((st)\alpha_1(r))\\
&-[\alpha(x),[z,y]]\otimes (\alpha_1(r)(st))+[[z,y],\alpha(x)]\otimes ((ts)\alpha_1(r))\big)=0.
\end{align*}
Using Hom-Leibniz relation on $L$ this is same as
\begin{align*}
\sum_{\sigma\in S_3}sgn(\sigma)\sigma &\big(([[x,y],\alpha(z)]-[[x,z],\alpha(y)])\otimes (\alpha_1(r)(st))-[[y,z],\alpha(x)]\otimes \big((st)\alpha_1(r))\\
&-([[x,z],\alpha(y)]-[[x,y],\alpha(z)])\otimes (\alpha_1(r)(st))+[[z,y],\alpha(x)]\otimes ((ts)\alpha_1(r))\big)=0.
 \end{align*}
Note that $S_3$ acting symmetrically on the sum. Thus, it is enough to prove that the coefficient of $([x, y], \alpha(z)]]$ is $0$.
\begin{align*}
&(\alpha_1(r)(st))+(\alpha_1(r)(ts))-((rs)\alpha_1(t))+(\alpha_1(r)(st))+(\alpha_1(r)(ts))-((rs)\alpha_1(t))\\
&2((\alpha_1(r)(st))+(\alpha_1(r)(ts))-((rs)\alpha_1(t)))=0
\end{align*}
using the Hom-Zinbiel relation.
\end{proof}
\begin{prop}
Let $(R,\alpha_1)$ be a Hom-Zinbiel algebra. Then, $(R, \alpha_1, 
\ast )$ is a Hom-associative and commutative algebra, where 
$$\ast:R\otimes R\to R,\,\,\,x\ast y=xy+yx.$$ 
\end{prop}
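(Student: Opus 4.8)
The plan is to verify in turn the three conditions packaged into Definition \ref{hom-ass-defn}: commutativity of $\ast$, compatibility of $\alpha_1$ with $\ast$, and the Hom-associativity identity. Commutativity is immediate, since by construction $x\ast y = xy+yx = yx+xy = y\ast x$ for all $x,y\in R$. The multiplicativity condition $\alpha_1(x\ast y)=\alpha_1(x)\ast\alpha_1(y)$ reduces, after expanding both sides by the definition of $\ast$, to the single identity $\alpha_1(xy)=\alpha_1(x)\alpha_1(y)$; this holds because $\alpha_1$ is the structure map of the (multiplicative) Hom-Zinbiel algebra and hence intertwines the Zinbiel product with itself.

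The substantive step is the Hom-associativity identity $(x\ast y)\ast\alpha_1(z)=\alpha_1(x)\ast(y\ast z)$. First I would expand each side using only the definition of $\ast$, obtaining on the left the four terms $(xy)\alpha_1(z)$, $(yx)\alpha_1(z)$, $\alpha_1(z)(xy)$, $\alpha_1(z)(yx)$, and on the right the four terms $\alpha_1(x)(yz)$, $\alpha_1(x)(zy)$, $(yz)\alpha_1(x)$, $(zy)\alpha_1(x)$. The idea is then to rewrite, on each side, exactly those terms that already have the shape $(ab)\alpha_1(c)$ by means of the Hom-Zinbiel relation $(ab)\alpha_1(c)=\alpha_1(a)(bc)+\alpha_1(a)(cb)$, pushing both sides into the common normal form consisting of expressions $\alpha_1(u)(vw)$.

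Carrying this out, the two terms $(xy)\alpha_1(z)$ and $(yx)\alpha_1(z)$ on the left together produce $\alpha_1(x)(yz)+\alpha_1(x)(zy)+\alpha_1(y)(xz)+\alpha_1(y)(zx)$, while on the right the two terms $(yz)\alpha_1(x)$ and $(zy)\alpha_1(x)$ produce $\alpha_1(y)(zx)+\alpha_1(y)(xz)+\alpha_1(z)(yx)+\alpha_1(z)(xy)$. After these substitutions both sides become the same symmetric sum of the six monomials $\alpha_1(x)(yz)$, $\alpha_1(x)(zy)$, $\alpha_1(y)(xz)$, $\alpha_1(y)(zx)$, $\alpha_1(z)(xy)$, $\alpha_1(z)(yx)$, and the identity follows. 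The hard part here is not any deep difficulty but the bookkeeping: one must apply the Hom-Zinbiel relation to precisely the correct pair of terms on each side so that the six resulting monomials match up, and it is exactly the symmetry of $\ast$ together with the two-term output of the Hom-Zinbiel relation that makes this matching automatic.
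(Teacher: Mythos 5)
Your argument is correct and is essentially the paper's own proof: both expand $(x\ast y)\ast\alpha_1(z)$ and $\alpha_1(x)\ast(y\ast z)$ into four Zinbiel monomials each, apply the Hom-Zinbiel relation to exactly the terms of the shape $(ab)\alpha_1(c)$, and observe that both sides reduce to the same symmetric sum of six monomials $\alpha_1(u)(vw)$. The one point where you go beyond the paper is the check of $\alpha_1(x\ast y)=\alpha_1(x)\ast\alpha_1(y)$, which the paper silently omits; note that this requires $\alpha_1(xy)=\alpha_1(x)\alpha_1(y)$, a multiplicativity hypothesis that is not part of the paper's stated definition of a Hom-Zinbiel algebra, so you are right to flag it as an extra assumption.
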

\begin{proof}
Commutativity is trivial from the definition. We only need to check the Hom-associativity.
\begin{align*}
\alpha_1(x)\ast (y\ast z)&=\alpha_1(x)\ast \big((yz)+ (zy)\big)\\
                                    &=\alpha_1(x)\big((yz)+ (zy)\big)+ \big((yz)+ (zy)\big)\alpha_1(x)\\
                                    &=\alpha_1(x)\big((yz)+ (zy)\big)+ (yz)\alpha_1(x)+ (zy)\alpha_1(x)\\
                                    &=\alpha_1(x)\big((yz)+ (zy)\big)+\alpha_1(y)\big((zx)+(xz)\big)+ \alpha_1(z)\big((yx)+ (xy)\big).
\end{align*}
\begin{align*}
(x\ast y)\ast \alpha_1(z)&=\big((xy)+ (yx)\big)\ast \alpha_1(z)\\
                                    &=\big((xy)+ (yx)\big)\alpha_1(z)+ \alpha_1(z)\big((xy)+ (yx)\big)\\
                                    &=(xy)\alpha_1(z)+ (yx)\alpha_1(z)+ \alpha_1(z)\big((xy)+ (yx)\big)\\
                                    &=\alpha_1(x)\big((yz)+ (zy)\big)+ \alpha_1(y)\big((xz)+ (zx)\big)+ \alpha_1(z)\big((xy)+ (yx)\big).         
\end{align*}
Thus, $(R, \alpha_1,\ast )$ is a Hom-associative, commutative algebra.
\end{proof}

\section{Cohomology of Hom-Leibniz algebras}
  In this section, we introduce a cohomology for Hom-Leibniz algebras with coefficients in Hom-associative and commutative algebras. To define cochain
complex suitable for our work, we need to consider multiplicative Hom-Leibniz algebras. From now on by Hom-Leibniz algebra we always mean a multiplicative Hom-Leibniz algebra.
  
Suppose $(L,[.,.],\alpha)$ is a Hom-Leibniz algebra. Denote $CL_n(L)=L^{\otimes n}$ and define 
$$d_n:CL_n(L)\to CL_{n-1}(L),$$
where 
\begin{align}\label{boundary-map}
d_n(x_1,\ldots,x_n)=\sum_{1\leq i<j\leq n}(-1)^{j+1}\big(\alpha(x_1),\ldots,\alpha(x_{i-1}),[x_i,x_j],\alpha(x_{i+1}),\ldots,\widehat{\alpha(x_j)},\ldots,\alpha(x_n)\big).
\end{align}
\begin{theorem}
For all $n\geq 1$, $d_{n-1}d_n=0$.
\end{theorem}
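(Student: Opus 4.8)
The plan is to imitate Loday's proof that the Leibniz boundary squares to zero \cite{L3}, keeping careful track of where the twist $\alpha$ lands and invoking multiplicativity at the key moments. Concretely, I would expand the composite $d_{n-1}\circ d_n$ as a double sum. The inner map $d_n$ produces, for each pair $i<j$, a length-$(n-1)$ tensor $T_{ij}$ whose $i$-th slot holds the bracket $[x_i,x_j]$ and whose remaining slots hold $\alpha(x_p)$; applying $d_{n-1}$ then forms a second bracket out of two slots of $T_{ij}$ and hits every other slot with a further $\alpha$. Sorting the resulting terms by which original variables get bracketed, every summand of $d_{n-1}d_n$ falls into one of two classes: the \emph{disjoint} terms, in which two disjoint pairs $\{i,j\}$ and $\{p,q\}$ of indices are bracketed, and the \emph{nested} terms, in which three indices interact.

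For the disjoint terms I would argue that each such term arises in exactly two ways --- bracketing $\{i,j\}$ first and $\{p,q\}$ second, or vice versa --- and that these two contributions carry opposite signs. The only subtlety is that in one order the entry built from $x_p,x_q$ is produced by the inner differential as $[x_p,x_q]$ and then hit by $\alpha$ (giving $\alpha([x_p,x_q])$), whereas in the other order its two arguments are first twisted and then bracketed by the outer differential (giving $[\alpha(x_p),\alpha(x_q)]$); multiplicativity $\alpha([x,y])=[\alpha(x),\alpha(y)]$ makes these two expressions literally equal, so the pair cancels. This is precisely the place where restricting to \emph{multiplicative} Hom-Leibniz algebras is needed.

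For the nested terms I would fix a triple $i<j<k$ and check that exactly three summands of $d_{n-1}d_n$ involve all of $x_i,x_j,x_k$, arising from the inner brackets $(i,j)$, $(i,k)$ and $(j,k)$ respectively. Tracking the placement of $\alpha$ --- the two entries fed into the inner bracket remain untwisted, while the third entry picks up exactly one $\alpha$ --- these three terms are, up to sign, $[[x_i,x_j],\alpha(x_k)]$, $[[x_i,x_k],\alpha(x_j)]$ and $[\alpha(x_i),[x_j,x_k]]$, which are exactly the three terms of the Hom-Leibniz identity $[\alpha(x_i),[x_j,x_k]]=[[x_i,x_j],\alpha(x_k)]-[[x_i,x_k],\alpha(x_j)]$. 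Verifying that their signs reproduce this relation makes them cancel.

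The genuinely tedious --- and hence most error-prone --- step is the sign bookkeeping: the exponents $(-1)^{j+1}$ from the two applications of the differential combine with the index shifts caused by deleting the slot $\widehat{\alpha(x_j)}$, and one must confirm that in the disjoint case the two orders really differ by a sign, and that in the nested case the three signs assemble into the Hom-Leibniz identity. The algebraic content, by contrast, is light: multiplicativity disposes of the disjoint terms and the (left) Hom-Leibniz identity disposes of the nested ones. I expect the sign analysis to be identical to Loday's classical computation, since the twist $\alpha$ is applied uniformly and never alters the relative ordering of indices.
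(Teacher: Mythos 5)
Your proposal is correct in its essential structure, but it is worth noting that the paper does not actually carry out this computation: its entire ``proof'' is a citation to Theorem 3.1 of Cheng--Su \cite{CS}, and what you have written is precisely a sketch of the argument contained in that reference. Your decomposition of $d_{n-1}d_n$ into disjoint and nested contributions is the right one, and you correctly locate the two places where the Hom structure enters: multiplicativity $\alpha([x,y])=[\alpha(x),\alpha(y)]$ is exactly what makes the two orderings of a disjoint pair produce literally equal tensors (one order yields $\alpha([x_p,x_q])$ in the relevant slot, the other $[\alpha(x_p),\alpha(x_q)]$), and for a nested triple $i<j<k$ the three surviving terms are, as you say, $[[x_i,x_j],\alpha(x_k)]$, $[[x_i,x_k],\alpha(x_j)]$ and $[\alpha(x_i),[x_j,x_k]]$, with every untouched slot receiving $\alpha^2$ uniformly in all three cases so that the Hom-Leibniz identity applies slotwise. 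The one piece you defer --- the sign bookkeeping --- is genuinely the same as in Loday's classical computation, since $\alpha$ is applied to every non-bracketed slot and therefore never disturbs the positional combinatorics that determine the signs; so the deferral is harmless. In short, your route supplies the direct verification that the paper outsources, and the only thing it buys over the paper's one-line proof is self-containedness.
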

\begin{proof}
Proof follows from the Theorem (3.1) of \cite{CS}.
\end{proof}
Thus, we have a chain complex
$$CL_\sharp(L): \cdots \rightarrow L^{\otimes n}\stackrel{d_n}{\rightarrow} L^{\otimes (n-1)}\stackrel{d_{n-1}}{\rightarrow}\cdots \stackrel{d_3}{\rightarrow} L^{\otimes 2} \stackrel{d_2}{\rightarrow} L .$$
The homology groups of this chain complex are called homology groups of the Hom-Leibniz algebra $(L,[.,.],\alpha)$ and $n$th homology group is denoted by $HL^\alpha_n(L)$. Note that for $\alpha=id$, $HL^\alpha_n(L)$ is same as the Leibniz homology of $L$ defined in \cite{L1}.
Suppose $(L,[.,.],\alpha)$ is a Hom-Leibniz algebra and let $(A,\mu, \alpha_1)$ be a Hom-associative and commutative algebra over $k$.

Set $CL_{\alpha, \alpha_1}^n(L;~A) =\lbrace f: L^{\otimes n}\to A\mid \alpha_1\circ f= f\circ \alpha^{\otimes n}\rbrace $, for all $n\geq 1$. We define $$\delta^n_{\alpha} : CL^n_{\alpha, \alpha_1}(L;~A)\rightarrow CL_{\alpha, \alpha_1}^{n+1}(L;~A)$$ by $\delta^n_\alpha (c) = c\circ d_{n+1},$ $c \in CL_{\alpha, \alpha_1}^n(L;~A),$ where $d_{n+1} : L^{\otimes(n+1)} \rightarrow L^{\otimes n}$ is the boundary map (\ref{boundary-map}). Explicitly, for any $c \in CL_{\alpha, \alpha_1}^n(L;~A)$ and $(x_1, \ldots ,x_{n+1}) \in L ^{\otimes (n+1)},$
$\delta^n_\alpha (c)(x_1, \ldots ,x_{n+1})$ is given by the expression
\begin{align}\label{coboundary-map}
\sum_{1\leq i < j \leq n} (-1)^{j+1} c\big(\alpha(x_1),\cdots,\alpha(x_{i-1}),[x_i,x_j],\alpha(x_{i+1}),\ldots,\widehat{\alpha(x_j)},\ldots,\alpha(x_{n+1})\big).
\end{align}
Note that $\delta^n_\alpha (c)$ need to satisfy the condition $\alpha_1\circ \delta^n_\alpha (c)= \delta^n_\alpha (c)\circ \alpha^{\otimes n}$ to be a member of $CL_{\alpha, \alpha_1}^{n+1}(L;~A)$. This is why we need to consider multiplicative Hom-Leibniz algebras to define cochain complex in this way.

Clearly, $\delta_\alpha^2 =0$ as $d^2=0,$ and therefore, $(CL_{\alpha, \alpha_1}^\sharp (L;~A), \delta)$ is a cochain complex. Its homology groups are called Hom-Leibniz cohomology groups of $L$ with coefficients in $A$ and $n$th cohomology group is denoted by $HL_{\alpha, \alpha_1}^n(L;~A).$
\section{Cup-product in Hom-Leibniz cohomology}
Main aim of this section is to define a cup-product operation on the graded cohomology of the Hom-Leibniz algebra and show that the cup-product satisfies the graded Hom-Zinbiel relation.

\begin{prop} 
Given a Hom-associative and commutative algebra $(A,\mu, \alpha_1)$ there is a induced map
\begin{align*}
\alpha_1: HL^n_{\alpha, \alpha_1}(L;~A)\to HL^n_{\alpha, \alpha_1}(L;~A),
\end{align*} 
for all $n\geq 1$.
\end{prop}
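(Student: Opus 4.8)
The plan is to construct the induced map first at the level of cochains and then pass to cohomology. For each $n \geq 1$ I would define an endomorphism of the cochain groups
$$\alpha_1^\sharp : CL^n_{\alpha, \alpha_1}(L;~A) \to CL^n_{\alpha, \alpha_1}(L;~A), \qquad \alpha_1^\sharp(f) = \alpha_1 \circ f,$$
and then show that it is a cochain map, so that it descends to the asserted endomorphism of $HL^n_{\alpha, \alpha_1}(L;~A)$, which we again denote by $\alpha_1$.

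The first thing to verify is that $\alpha_1^\sharp$ is well defined, that is, that $\alpha_1 \circ f$ again lies in $CL^n_{\alpha, \alpha_1}(L;~A)$ whenever $f$ does. By the definition of the cochain group this amounts to the equality $\alpha_1 \circ (\alpha_1 \circ f) = (\alpha_1 \circ f) \circ \alpha^{\otimes n}$. Using the defining relation $\alpha_1 \circ f = f \circ \alpha^{\otimes n}$ of a cochain, the right-hand side becomes $\alpha_1 \circ (f \circ \alpha^{\otimes n}) = \alpha_1 \circ (\alpha_1 \circ f)$, which is precisely the left-hand side; both expressions equal $\alpha_1^2 \circ f$. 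Hence the cochain condition is preserved.

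Next I would check the cochain-map property $\delta^n_\alpha \circ \alpha_1^\sharp = \alpha_1^\sharp \circ \delta^n_\alpha$. Since $\delta^n_\alpha$ is precomposition with the boundary map $d_{n+1}$ while $\alpha_1^\sharp$ is postcomposition with $\alpha_1$, this is nothing more than the associativity of composition of maps:
$$\delta^n_\alpha\big(\alpha_1^\sharp(c)\big) = (\alpha_1 \circ c) \circ d_{n+1} = \alpha_1 \circ (c \circ d_{n+1}) = \alpha_1^\sharp\big(\delta^n_\alpha(c)\big).$$
Consequently $\alpha_1^\sharp$ carries cocycles to cocycles and coboundaries to coboundaries, and therefore induces a well-defined map on $HL^n_{\alpha, \alpha_1}(L;~A)$ for every $n \geq 1$, with linearity inherited from that of $\alpha_1$ and of composition.

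I do not expect any genuine obstacle here, as the argument is entirely formal once $\alpha_1^\sharp$ is written down. The only point at which the Hom-structure actually enters is the preservation of the cochain condition, and even there the verification collapses immediately to the defining equation $\alpha_1 \circ f = f \circ \alpha^{\otimes n}$; the chain-map identity is pure associativity of composition.
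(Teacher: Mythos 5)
Your proposal is correct and follows essentially the same route as the paper: postcomposition with $\alpha_1$ on cochains, commutation with $\delta^n_\alpha$ (which the paper verifies by writing out the coboundary formula and pulling $\alpha_1$ through the sum, i.e.\ exactly your associativity-of-composition observation), and descent to cohomology. Your explicit check that $\alpha_1\circ f$ again satisfies the cochain condition $\alpha_1\circ(\alpha_1\circ f)=(\alpha_1\circ f)\circ\alpha^{\otimes n}$ is a small point the paper leaves implicit, and is a worthwhile addition.
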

\begin{proof}
Using the twisting map $\alpha_1$ of $A$ we define a map 
\begin{align*}
\underline{\alpha_1}: CL_{\alpha, \alpha_1}^n(L;&~A)\to CL_{\alpha, \alpha_1}^n(L; ~A),\\
&f\mapsto \alpha_1\circ f=f\circ \alpha^{\otimes n}.
\end{align*}
Suppose $f\in  CL_{\alpha, \alpha_1}^n(L;~A)$ is a cocycle. This implies $\delta^n_\alpha f=0$. We need to show $\delta^n_\alpha (\alpha_1\circ f)=0$.  For all $(x_1\ldots,x_{n+1})$, we have,
\begin{align*}
&\delta^n_\alpha (\alpha_1\circ f)(x_1\ldots,x_{n+1})\\
&=\sum_{1\leq i<j\leq n+1}(\alpha_1\circ f)(\alpha(x_1),\ldots,\alpha(x_{i-1}),[x_i,x_j],\alpha(x_{i+1}),\ldots,\widehat{\alpha(x_j)},\ldots,\alpha(x_{n+1}))\\
                                                                                     &=\alpha_1\Big(\sum_{1\leq i<j\leq n+1} f(\alpha(x_1),\ldots,\alpha(x_{i-1}),[x_i,x_j],\alpha(x_{i+1}),\ldots,\widehat{\alpha(x_j)},\ldots,\alpha(x_{n+1}))\Big)\\
                                                                                     &=\alpha_1(\delta^n_\alpha f)=0.
\end{align*}
Now suppose $f=\delta^{n-1}_\alpha  g$ for some $g\in CL_{\alpha, \alpha_1}^{n-1}(L;~A)$. Using the linearity of $\alpha_1$ we have $\alpha_1\circ f=\alpha_1\circ (\delta^{n-1}_\alpha  g)=\delta^{n-1}_\alpha  (\alpha_1\circ g)$.

Thus, under the image of $\underline{\alpha_1}$ cocycles goes to cocycles and coboundaries goes to coboundaries. So $\underline{\alpha_1}$ induces a map on the cohomology level
\begin{align*}
\alpha_1: HL^n_{\alpha, \alpha_1}(L;~A)\to HL^n_{\alpha, \alpha_1}(L;~A).
\end{align*}
Note that we have denoted the induced map on the cohomology level by the same notation as for the twisting map $\alpha_1$ of $A$.
\end{proof}

\begin{lemma}\label{lemma main}
Given a Hom-Leibniz algebra $(L,[.,.],\alpha)$ and a Hom-associative, commutative algebra $(A,\mu, \alpha_1)$, there is a bilinear operation on the graded space $CL_{\alpha, \alpha_1}^\ast (L;~A)$, 
\begin{align*}
\cup : CL_{\alpha, \alpha_1}^n(L;~A)\times CL_{\alpha, \alpha_1}^m(L;~A)\rightarrow CL_{\alpha, \alpha_1}^{n+m}(L;~A),
\end{align*}
defined as
\begin{align}
f\cup g &:=\mu\circ (\alpha_1^{m-1}f\otimes \alpha^{n-1}_1g)\circ \rho_{n,m}\\
             &:=\mu\circ (f\circ (\alpha^{m-1})^{\otimes n}\otimes g\circ (\alpha^{n-1})^{\otimes m})\circ \rho_{n,m},
\end{align}
for $f\in CL_\alpha^n(L;~A)$ and $g\in CL_\alpha^m(L;~A)$
and the operation $\cup$ satisfies
\begin{align}\label{cup well define}
\delta_\alpha(f\cup g)=\delta_\alpha(f)\cup g+~(-1)^{|f|} f\cup \delta_\alpha(g).
\end{align}
\end{lemma}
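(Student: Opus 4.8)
The plan is to exploit that the coboundary is given by precomposition with the Hom-Leibniz boundary, $\delta^n_\alpha(c)=c\circ d_{n+1}$, so that the desired product rule becomes an identity between the shuffle operator $\rho_{n,m}$ and the boundary maps $d_\bullet$, decorated by powers of the twisting map. Writing $F=f\circ(\alpha^{m-1})^{\otimes n}$ and $G=g\circ(\alpha^{n-1})^{\otimes m}$, so that $f\cup g=\mu\circ(F\otimes G)\circ\rho_{n,m}$, the left-hand side unwinds to
\[
\delta_\alpha(f\cup g)=(f\cup g)\circ d_{n+m+1}=\mu\circ(F\otimes G)\circ\rho_{n,m}\circ d_{n+m+1}.
\]
First I would reduce the whole statement to understanding the single composite $\rho_{n,m}\circ d_{n+m+1}\colon L^{\otimes(n+m+1)}\to L^{\otimes(n+m)}$.

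Two auxiliary facts drive the argument. First, multiplicativity of $L$ (i.e. $\alpha([x,y])=[\alpha(x),\alpha(y)]$, which is exactly why Section~4 restricts to multiplicative algebras) yields that the diagonal twist commutes with the boundary: a direct inspection of \eqref{boundary-map} gives $\alpha^{\otimes k}\circ d_{k+1}=d_{k+1}\circ\alpha^{\otimes(k+1)}$, and hence $(\alpha^{p})^{\otimes k}\circ d_{k+1}=d_{k+1}\circ(\alpha^{p})^{\otimes(k+1)}$ for all $p$. Second, I would establish the Hom-analogue of Loday's compatibility between the equivariant shuffle map and the boundary (Proposition~1.8 and the surrounding computation of \cite{L3}), in the twisted form
\[
\rho_{n,m}\circ d_{n+m+1}=(d_{n+1}\otimes\alpha^{\otimes m})\circ\rho_{n+1,m}+(-1)^{n}(\alpha^{\otimes n}\otimes d_{m+1})\circ\rho_{n,m+1}.
\]
To prove this I would expand $d_{n+m+1}$ as the sum over pairs $(i,j)$ from \eqref{boundary-map}, substitute the defining $(n-1,m)$-shuffle sum \eqref{explicit-above-map} for $\rho_{n,m}$, and reorganize the composite according to whether the bracketed pair lands in the first block, the second block, or straddles the two; the straddling and first-slot contributions recombine exactly as in the classical shuffle identity, while the surviving-slot twists prescribed by \eqref{boundary-map} deposit the extra $\alpha^{\otimes m}$ (resp. $\alpha^{\otimes n}$) on the block not carrying the bracket.

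With this identity in hand, I would precompose with $\mu\circ(F\otimes G)$ and match twists slot by slot. For the first term, $\mu\circ(F\otimes G)\circ(d_{n+1}\otimes\alpha^{\otimes m})=\mu\circ\big((F\circ d_{n+1})\otimes(G\circ\alpha^{\otimes m})\big)$; by the first auxiliary fact $F\circ d_{n+1}=(\delta_\alpha f)\circ(\alpha^{m-1})^{\otimes(n+1)}$, while $G\circ\alpha^{\otimes m}=g\circ(\alpha^{n})^{\otimes m}$, and these are precisely the twisted factors occurring in $\delta_\alpha(f)\cup g$ (note the degree of $f$ has risen by one, so the companion twist on $g$ correctly rises from $\alpha^{n-1}$ to $\alpha^{n}$). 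Hence this term equals $\delta_\alpha(f)\cup g$. Symmetrically, the extra $\alpha^{\otimes n}$ on the first block upgrades $F=f\circ(\alpha^{m-1})^{\otimes n}$ to $f\circ(\alpha^{m})^{\otimes n}$, matching the twist on $f$ in $f\cup\delta_\alpha(g)$, so the second term equals $(-1)^{n}f\cup\delta_\alpha(g)$. Along the way I would also record that $f\cup g$ genuinely lies in $CL_{\alpha,\alpha_1}^{n+m}(L;A)$, which follows because $\rho_{n,m}$ (a signed sum of permutations) commutes with the diagonal action of $\alpha$ and because $\alpha_1\circ\mu=\mu\circ(\alpha_1\otimes\alpha_1)$.

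The main obstacle is the twisted combinatorial identity in the second step: getting the signs right and, more delicately, verifying that the $\alpha$-twists which \eqref{boundary-map} forces onto the untouched entries are distributed across the two blocks exactly as the extra powers demanded by the Hom cup-product convention, where raising the degree of one factor raises the companion factor's twist from $\alpha^{k-1}$ to $\alpha^{k}$. The bracket terms that straddle the two shuffle blocks, together with the contributions involving the fixed first slot singled out by $\rho_{n,m}=1_1\otimes\widetilde{sh}_{n-1,m}$, are the subtle ones; their cancellation and regrouping is the technical core, and once the untwisted version is isolated it reduces to the identity already available from \cite{L3}.
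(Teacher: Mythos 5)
Your proposal is correct and follows essentially the same route as the paper: your identity for $\rho_{n,m}\circ d_{n+m+1}$ is precisely the paper's key equation \eqref{well define cond} with the outer powers of $\alpha$ factored out via the commutation $(\alpha^{p})^{\otimes k}\circ d_{k+1}=d_{k+1}\circ(\alpha^{p})^{\otimes (k+1)}$ (which is exactly where multiplicativity enters in both arguments), and both proofs ultimately defer the combinatorial term-matching and cancellation of the straddling brackets to Loday's shuffle computation in \cite{L3}. The only difference is presentational: you isolate the twist--boundary commutation as a separate step and explicitly flag the straddling-term cancellation, whereas the paper verifies instead that each summand of the right-hand side occurs on the left and then cites \cite{L3} for the counting.
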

\begin{proof}
To prove
\begin{align}
\delta_\alpha(f\cup g)=\delta_\alpha(f)\cup g+~(-1)^{|f|} f\cup \delta_\alpha(g).
\end{align}
It is enough to show the following
\begin{align}\label{well define cond}
&\big((\alpha^{m-1})^{\otimes n}\otimes (\alpha^{n-1})^{\otimes m}\big)\circ \rho_{n,m}\circ d_{n+m+1}\nonumber \\
&=~ \big((d_{n+1}\circ (\alpha^{m-1})^{\otimes{n+1}})\otimes (\alpha^n)^{\otimes m}\big)\circ \rho_{n+1,m}\\
&+(-1)^n \big((\alpha^m)^{\otimes n}\otimes (d_{m+1}\circ (\alpha^{n-1})^{\otimes m+1})\big)\circ \rho_{n,m+1}\nonumber.
\end{align}
We define
\begin{align*}
\delta^{i,j}_\alpha: L^{\otimes p}\to L^{\otimes p-1},~~~\text{for}~~~1\leq i<j\leq p
\end{align*}
as $\delta^{i,j}_\alpha(x_1,\ldots,x_p):=\big(\alpha(x_1),\ldots,\alpha(x_{i-1}),[x_i,x_j],\alpha(x_{i+1}),\ldots,\widehat{\alpha(x_j)},\ldots,\alpha(x_n)\big)$.
Note that, $d_p=\sum_{1\leq i<j\leq p}(-1)^j\delta^{i,j}_\alpha$.

Consider
\begin{align}\label{case1}
\big((\delta^{k,l}_\alpha\circ (\alpha^{m-1})^{\otimes{n+1}})\otimes (\alpha^n)^{\otimes m}\big)(1_1\otimes \sigma^{-1}),
\end{align}where $1\leq k<l \leq {n+1}$ and $\sigma$ is a $(n,m)$-shuffle acting on $\lbrace 2,\ldots,n+m+1\rbrace$. This operator is a part of $\big((d_{n+1}\circ (\alpha^{m-1})^{\otimes{n+1}})\otimes (\alpha^n)^{\otimes m}\big)\circ \rho_{n+1,m}$. 

As $\sigma$ is a $(n,m)$-shuffle, $\sigma(k)<\sigma(l)$. The equation (\ref{case1}) is same as the following equation
\begin{align}\label{case 1 equ}
\big((\alpha^{m-1})^{\otimes n}\otimes (\alpha^{n-1})^{\otimes m}\big)(1_1\otimes \omega^{-1})\delta^{\sigma(k),\sigma(l)}_\alpha
\end{align}
for some permutation $\omega$. As $l\leq n+1$ and $\sigma$ is a $(n, m)$-shuffle, $\omega$ is a $(n-1, m)$-shuffle.

Thus, equation (\ref{case 1 equ}) is a part of $\big((\alpha^{m-1})^{\otimes n}\otimes (\alpha^{n-1})^{\otimes m}\big)\circ \rho_{n,m}\circ d_{n+m+1}$.

Now consider,
\begin{align}\label{case 2}
 \big((\alpha^m)^{\otimes n}\otimes (\delta^{k,l}_\alpha\circ (\alpha^{n-1})^{\otimes m+1})\big)(1_1\otimes \sigma^{-1}),
\end{align}
where $n+1\leq k<l\leq n+m+1$ and $\sigma$ is a $(n-1,m+1)$-shuffle acting on $\lbrace 2,\ldots,n+m+1\rbrace$. This operator is a part of $\big((\alpha^m)^{\otimes n}\otimes (d_{m+1}\circ (\alpha^{n-1})^{\otimes m+1})\big)\circ \rho_{n,m+1}$. As $\sigma$ is a $(n-1,m+1)$-shuffle, $\sigma(k)<\sigma(l).$ The equation (\ref{case  2}) is same as the following equation
\begin{align}\label{case 2 equ}
\big((\alpha^{m-1})^{\otimes n}\otimes (\alpha^{n-1})^{\otimes m}\big)(1_1\otimes \omega^{-1})\delta^{\sigma(k),\sigma(l)}_\alpha
\end{align}
for some permutation $\omega$. As $k\geq n+1$ and $\sigma$ is a $(n, m)$-shuffle, $\omega$ is a $(n-1, m)$-shuffle.

Thus, equation (\ref{case 2 equ}) is a part of $\big((\alpha^{m-1})^{\otimes n}\otimes (\alpha^{n-1})^{\otimes m}\big)\circ \rho_{n,m}\circ d_{n+m+1}$.

Using the same argument from \cite{L3}, it is easy to check that both sides of (\ref{well define cond}) contains the same number of elements. We just showed that terms in the right hand side belongs to terms in the left hand side. This prove the validity of the equation (\ref{well define cond}). Therefore, the cup product is well-defined. 
\end{proof}

\begin{theorem}\label{func}
Let $(L_1, [.,.], \alpha)$ and $(L_2, [.,.], \bar{\alpha})$ be two Hom-Leibniz algebras and $(A, \mu, \alpha_1)$ be any Hom-associative, commutative algebra. The cup-product is functorial, that is, if $\phi : L_1 \to L_2$ is a homomorphism then the induced map 
$$\phi^\ast : HL^n_{\bar{\alpha}, \alpha_1}(L_2;~A) \to HL^n_{\alpha, \alpha_1}(L_1;~A),~~~\phi^\ast (f) \mapsto f \circ \phi^{\otimes n},$$
commutes with $\cup$, that is, 
$$\phi^\ast (f \cup g) = \phi^\ast (f) \cup \phi^\ast(g).$$
\end{theorem}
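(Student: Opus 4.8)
The plan is to establish the identity $\phi^\ast(f\cup g)=\phi^\ast(f)\cup\phi^\ast(g)$ first at the level of cochains and then observe that it descends to cohomology. Before that I would record two preliminary facts that give the induced map $\phi^\ast$ meaning. First, that $\phi^\ast$ carries $CL^n_{\bar{\alpha},\alpha_1}(L_2;~A)$ into $CL^n_{\alpha,\alpha_1}(L_1;~A)$: if $f$ satisfies $\alpha_1\circ f=f\circ\bar{\alpha}^{\otimes n}$, then using the homomorphism relation $\phi\circ\alpha=\bar{\alpha}\circ\phi$, equivalently $\phi^{\otimes n}\circ\alpha^{\otimes n}=\bar{\alpha}^{\otimes n}\circ\phi^{\otimes n}$, one checks $\alpha_1\circ(f\circ\phi^{\otimes n})=f\circ\bar{\alpha}^{\otimes n}\circ\phi^{\otimes n}=(f\circ\phi^{\otimes n})\circ\alpha^{\otimes n}$, so $\phi^\ast(f)$ lands in the correct cochain group. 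Second, that $\phi^\ast$ is a cochain map: since $\phi$ preserves both the bracket and the twisting map, the boundary maps of \eqref{boundary-map} satisfy $\phi^{\otimes n}\circ d^{L_1}_{n+1}=d^{L_2}_{n+1}\circ\phi^{\otimes(n+1)}$, and dualizing via \eqref{coboundary-map} yields $\delta^n_\alpha\circ\phi^\ast=\phi^\ast\circ\delta^n_{\bar{\alpha}}$. Hence $\phi^\ast$ induces the stated map on cohomology.

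Next I would unwind both sides using the definition of the cup-product from Lemma \ref{lemma main}, keeping track of the fact that the product on $L_2$ is twisted by $\bar{\alpha}$ while the product on $L_1$ is twisted by $\alpha$. Writing
$$\phi^\ast(f\cup g)=\big(\mu\circ(f\circ(\bar{\alpha}^{m-1})^{\otimes n}\otimes g\circ(\bar{\alpha}^{n-1})^{\otimes m})\circ\rho_{n,m}\big)\circ\phi^{\otimes(n+m)},$$
the main step is to commute $\phi^{\otimes(n+m)}$ past $\rho_{n,m}$. Because $\rho_{n,m}$ is a signed sum of factor-permutations defined uniformly for every vector space, it is natural: $\rho_{n,m}\circ\phi^{\otimes(n+m)}=\phi^{\otimes(n+m)}\circ\rho_{n,m}$. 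Splitting $\phi^{\otimes(n+m)}=\phi^{\otimes n}\otimes\phi^{\otimes m}$ across the tensor factors then lets me move $\phi$ into the evaluation slots of $f$ and $g$.

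Finally I would apply the intertwining $\bar{\alpha}^{m-1}\circ\phi=\phi\circ\alpha^{m-1}$, which follows by induction from $\bar{\alpha}\circ\phi=\phi\circ\alpha$, together with its $(n-1)$-analogue, to rewrite $f\circ(\bar{\alpha}^{m-1})^{\otimes n}\circ\phi^{\otimes n}=f\circ\phi^{\otimes n}\circ(\alpha^{m-1})^{\otimes n}=\phi^\ast(f)\circ(\alpha^{m-1})^{\otimes n}$, and similarly for $g$. This transforms the expression precisely into the definition of $\phi^\ast(f)\cup\phi^\ast(g)$ computed in $L_1$, proving the cochain-level identity. Since $\phi^\ast$ is a cochain map and $\cup$ descends to cohomology by Lemma \ref{lemma main}, the identity holds on $HL^\ast_{\alpha,\alpha_1}$. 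I expect the only delicate point to be the bookkeeping of which twisting map, $\alpha$ or $\bar{\alpha}$, sits in each slot, so that the naturality of $\rho_{n,m}$ and the relation $\phi\circ\alpha=\bar{\alpha}\circ\phi$ are invoked in the right order; once this is arranged the computation is a direct substitution rather than a genuine obstacle.
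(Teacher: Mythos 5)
Your proposal is correct and follows essentially the same route as the paper: both reduce the claim to a cochain-level computation whose only real content is the naturality of $\rho_{n,m}$ (a signed sum of factor permutations) with respect to $\phi^{\otimes(n+m)}$, combined with the compatibility of $\phi$ with the twisting maps. The paper simply runs the computation in the other direction, starting from $\phi^\ast(f)\cup\phi^\ast(g)$, and works with the form $\alpha_1^{m-1}\circ f$ so that the intertwining $\bar{\alpha}\circ\phi=\phi\circ\alpha$ is absorbed implicitly; your added checks that $\phi^\ast$ lands in the right cochain group and is a cochain map are details the paper leaves unstated.
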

\begin{proof}
Let $f \in CL^n_{\bar{\alpha}, \alpha_1}(L_2;~A)$ and $g \in CL^m_{\bar{\alpha}, \alpha_1}(L_2;~A)$. To prove the theorem, it is sufficient to show
$$\phi^\ast (f \cup g) = \phi^\ast (f) \cup \phi^\ast (g)$$
on cochains.
\begin{align*}
\phi^\ast (f) \cup \phi^\ast (g) &= \mu\circ (\alpha_1^{m-1} \phi^\ast (f)\otimes \alpha^{n-1}_1 \phi^\ast (g))\circ \rho_{n,m}  \\
                                                  &= \mu\circ (\alpha_1^{m-1} f \circ \phi^{\otimes n}\otimes \alpha^{n-1}_1 g \circ \phi^{\otimes m})\circ \rho_{n,m}\\
                                                  &= \mu\circ (\alpha_1^{m-1} f \otimes \alpha^{n-1}_1 g)\circ \rho_{n,m} \circ \phi^{\otimes n+m}\\
                                                  &= \phi^\ast (f \cup g).
\end{align*}
\end{proof}
\begin{cor}
If $\phi : L_1 \to L_2$ is an isomorphism then the induced map on the cohomology level 
$$\phi^\ast : HL^n_{\bar{\alpha}, \alpha_1}(L_2;~A) \to HL^n_{\alpha, \alpha_1}(L_1;~A),$$
is also an isomorphism.
\end{cor}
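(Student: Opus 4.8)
The plan is to exploit the contravariant functoriality of the induced map $(-)^\ast$ together with the fact that the inverse of an isomorphism of Hom-Leibniz algebras is again a homomorphism. First I would observe that, since $\phi : L_1 \to L_2$ is an isomorphism, the set-theoretic inverse $\phi^{-1} : L_2 \to L_1$ is itself a homomorphism of Hom-Leibniz algebras: applying $\phi^{-1}$ to the bracket identity $\phi([x,y]_1) = [\phi(x), \phi(y)]_2$ and to $\phi \circ \alpha = \bar{\alpha} \circ \phi$, and using bijectivity of $\phi$, yields $\phi^{-1}([u,v]_2) = [\phi^{-1}(u), \phi^{-1}(v)]_1$ and $\phi^{-1} \circ \bar{\alpha} = \alpha \circ \phi^{-1}$. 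Consequently the induced map $(\phi^{-1})^\ast : HL^n_{\alpha, \alpha_1}(L_1;~A) \to HL^n_{\bar{\alpha}, \alpha_1}(L_2;~A)$ is a well-defined map on cohomology, landing in the correct cochain groups.

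Next I would record the elementary functoriality relations at the cochain level. For a second homomorphism $\psi : L_2 \to L_3$ one computes $(\psi \circ \phi)^\ast(h) = h \circ (\psi \circ \phi)^{\otimes n} = h \circ \psi^{\otimes n} \circ \phi^{\otimes n} = \phi^\ast(\psi^\ast(h))$, so that $(\psi \circ \phi)^\ast = \phi^\ast \circ \psi^\ast$, and obviously $(\mathrm{id})^\ast = \mathrm{id}$. These identities hold on cochains and hence descend to the cohomology groups.

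Applying them to $\phi^{-1} \circ \phi = \mathrm{id}_{L_1}$ and $\phi \circ \phi^{-1} = \mathrm{id}_{L_2}$ gives
\[
\phi^\ast \circ (\phi^{-1})^\ast = (\phi^{-1} \circ \phi)^\ast = \mathrm{id}, \qquad (\phi^{-1})^\ast \circ \phi^\ast = (\phi \circ \phi^{-1})^\ast = \mathrm{id},
\]
so that $\phi^\ast$ and $(\phi^{-1})^\ast$ are mutually inverse. Hence $\phi^\ast$ is an isomorphism for every $n \geq 1$.

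I expect no serious obstacle here; the only point demanding care is checking that $\phi^{-1}$ genuinely preserves the bracket and commutes with the twisting map, so that $(\phi^{-1})^\ast$ really takes values in $CL^n_{\bar{\alpha}, \alpha_1}(L_2;~A)$ — a routine consequence of bijectivity. If in addition one wants $\phi^\ast$ to be an isomorphism of graded algebras, this is immediate by combining the above with the cup-product compatibility already proved in Theorem \ref{func}.
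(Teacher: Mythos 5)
Your argument is correct and is exactly the standard functoriality argument the paper intends: the paper states this corollary immediately after Theorem \ref{func} with no written proof, treating it as an immediate consequence of the fact that $(\psi\circ\phi)^\ast=\phi^\ast\circ\psi^\ast$ and $(\mathrm{id})^\ast=\mathrm{id}$ applied to $\phi$ and $\phi^{-1}$. Your additional care in checking that $\phi^{-1}$ is again a homomorphism (so that $(\phi^{-1})^\ast$ lands in the right cochain groups) is the only nontrivial point, and you handle it correctly.
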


\begin{remark}
If $f_2 \cup f_2 =0$, that is, $f$ is a square zero element for some $f_2 \in HL^n_{\bar{\alpha}, \alpha_1}(L_2;~A)$ and $f_1 \cup f_1 \neq 0$ for any $f_1 \in HL^n_{\alpha, \alpha_1}(L_1;~A)$. Then we can conclude that $L_1$ and $L_2$ can not be ismorphic as Hom-Leibniz algebras. In this way the cup-product can be used to distinguish Hom-Leibniz algebras.
\end{remark}

Now we show the cup-product satisfes the graded Hom-Zinbiel relation. 
\begin{theorem}
The operation $\cup$ induces a well defined bilinear product on the graded cohomology, called the cup-product, 
\begin{align*}
\cup : HL^n_{\alpha, \alpha_1}(L;~A)\times HL^m_{\alpha, \alpha_1}(L;~A)\rightarrow HL^{n+m}_{\alpha, \alpha_1}(L;~A),
\end{align*}
 and the cup product $\cup$ together with the linear map
 \begin{align*}
\alpha_1: HL^n_{\alpha, \alpha_1}(L;~A)\to HL^n_{\alpha, \alpha_1}(L;~A),
\end{align*}
satisfies  the  following graded Hom-Zinbiel formula
\begin{align*}
\alpha_1(a\cup b)\cup c= a\cup\alpha_1(b\cup c)+(-1)^{|b||c|}a\cup\alpha_1(c\cup b),
\end{align*}
where $a\in HL^n_{\alpha, \alpha_1}(L;~A),~b\in HL^m_{\alpha, \alpha_1}(L;A),~c\in HL^r_{\alpha, \alpha_1}(L;~A)$.
\end{theorem}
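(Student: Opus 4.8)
The plan is to prove the two assertions in turn: first that $\cup$ descends to a well-defined product on cohomology, and then that the induced product satisfies the displayed graded Hom-Zinbiel identity.

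For well-definedness I would appeal directly to the Leibniz rule of Lemma~\ref{lemma main}, namely $\delta_\alpha(f\cup g)=\delta_\alpha(f)\cup g+(-1)^{|f|}f\cup\delta_\alpha(g)$. This is the only input needed: if $f,g$ are cocycles then $\delta_\alpha(f\cup g)=0$, so $f\cup g$ is a cocycle; and if in addition $g=\delta_\alpha g'$, the same rule shows $f\cup g=\pm\,\delta_\alpha(f\cup g')$ is a coboundary (symmetrically in the first variable). Hence $[f]\cup[g]$ is independent of representatives, exactly as in ordinary homological algebra. Throughout I would use the compatibility $\alpha_1\circ f=f\circ\alpha^{\otimes n}$ together with the multiplicativity $\alpha_1\circ\mu=\mu\circ(\alpha_1\otimes\alpha_1)$ to check that both $\cup$ and the operator $\alpha_1$ of the preceding Proposition preserve the cochain subspaces $CL^\ast_{\alpha,\alpha_1}$, so that the entire identity may be verified on cochains and then transported to cohomology.

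For the Hom-Zinbiel identity I would fix representatives $f\in CL^n$, $g\in CL^m$, $h\in CL^r$ of $a,b,c$ and expand each of the three terms $\alpha_1(f\cup g)\cup h$, $f\cup\alpha_1(g\cup h)$, $f\cup\alpha_1(h\cup g)$ by the definition of $\cup$. The first mechanical step is to drive every twisting map inward: using $\alpha_1\circ\mu=\mu\circ(\alpha_1\otimes\alpha_1)$, then $\alpha_1\circ f=f\circ\alpha^{\otimes n}$, and finally the fact that a \emph{uniform} power $(\alpha^k)^{\otimes p}$ commutes with every permutation and hence with each shuffle map $\rho_{p,q}$. After this reduction the left-hand term takes the form
\begin{align*}
&\mu\circ(\mu\otimes\mathrm{id})\circ\big(f\circ(\alpha^{m+r-1})^{\otimes n}\otimes g\circ(\alpha^{n+r-1})^{\otimes m}\otimes h\circ(\alpha^{n+m-1})^{\otimes r}\big)\\
&\qquad\circ(\rho_{n,m}\otimes 1_r)\circ\rho_{n+m,r},
\end{align*}
while the two right-hand terms produce the analogous expressions with $\mu\circ(\mathrm{id}\otimes\mu)$ and with the composites $(1_n\otimes\rho_{m,r})\circ\rho_{n,m+r}$, respectively $(1_n\otimes\rho_{r,m})\circ\rho_{n,m+r}$. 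The crucial observation is that in all three expressions the arguments $f,g,h$ carry \emph{identical} powers of $\alpha$; only the bracketing of $\mu$ and the arrangement of the $\rho$'s differ (and in the last term $g$ and $h$ are transposed).

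The two structural inputs then finish the matching. The shuffle relation~(\ref{relation-required-for-proof}) rewrites $(\rho_{n,m}\otimes 1_r)\circ\rho_{n+m,r}$ as $(1_n\otimes\rho_{m,r})\circ\rho_{n,m+r}$ plus the $(-1)^{rm}$-term built from $\tau_{r,m}\circ\rho_{r,m}$; the first summand is what pairs with $f\cup\alpha_1(g\cup h)$, while in the second the permutation $\tau_{r,m}$ interchanges the $b$- and $c$-blocks by~(\ref{tau}), and commutativity of $\mu$ identifies it with $(-1)^{|b||c|}f\cup\alpha_1(h\cup g)$ (note $(-1)^{rm}=(-1)^{|b||c|}$). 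In parallel, I would invoke Hom-associativity $\mu(\mu(x,y),\alpha_1(z))=\mu(\alpha_1(x),\mu(y,z))$ to convert the outer bracketing $\mu\circ(\mu\otimes\mathrm{id})$ into $\mu\circ(\mathrm{id}\otimes\mu)$. I expect this last step to be the main obstacle: Hom-associativity trades one factor of $\alpha_1$ between the outermost and the third tensor slot, so the $\alpha_1$-decorations on the inner cup products in the statement are not cosmetic but are forced precisely in order that the powers of $\alpha$ on $f$, $g$, and $h$ reconcile once the bracketing is changed. The genuinely new work relative to Loday~\cite{L3} is therefore this twist bookkeeping that renders Hom-associativity applicable; the underlying shuffle combinatorics is inherited verbatim from the Leibniz case.
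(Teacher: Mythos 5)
Your proposal takes essentially the same route as the paper: well-definedness on cohomology follows from the derivation formula of Lemma~\ref{lemma main}, and the graded Hom-Zinbiel identity is obtained by composing the shuffle relation~(\ref{relation-required-for-proof}) with $\mu\circ\bigl(f(\alpha^{m+r-1})^{\otimes n}\otimes g(\alpha^{n+r-1})^{\otimes m}\otimes h(\alpha^{n+m-1})^{\otimes r}\bigr)$, using commutativity of $\mu$ and the permutation $\tau_{r,m}$ of~(\ref{tau}) to identify the signed term with $(-1)^{|b||c|}a\cup\alpha_1(c\cup b)$. The one place you go beyond the paper is the rebracketing step: the paper applies $\mu$ to three factors without fixing a bracketing and never invokes (Hom-)associativity, whereas you correctly isolate the passage from $\mu\circ(\mu\otimes\mathrm{id})$ to $\mu\circ(\mathrm{id}\otimes\mu)$ as the main obstacle; note, however, that after inserting the $\alpha_1$-decorations the powers of $\alpha_1$ carried by $f$, $g$, $h$ are already \emph{identical} on the two sides, so a direct application of $\mu(\mu(x,y),\alpha_1(z))=\mu(\alpha_1(x),\mu(y,z))$ shifts those powers rather than matching them, and this step still requires a genuine argument that neither your sketch nor the paper supplies.
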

\begin{proof}
Suppose $a,b,c$ is represented by cocycles $f,g,h$ respectively. From the formula (\ref{cup well define}) of Lemma \ref{lemma main} it is clear that if $f, g$ are cocycles then $f\cup g$ is also a cocycle and if either $f$ or $g$ is a coboundary then $f\cup g$ is a coboundary. Thus, the cup-product defined on the cochain level induces a cup-product on the cohomology level.

Since $A$ is commutative, we have,
\begin{align*}
\mu\circ (h\circ (\alpha^{m-1})^{\otimes r}\otimes g\circ (\alpha^{r-1})^{\otimes m})\circ \rho_{r,m}=\mu\circ (g\circ (\alpha^{r-1})^{\otimes m})\otimes h\circ (\alpha^{m-1})^{\otimes r}) \circ \tau_{r,m}\circ \rho_{r,m}.
\end{align*}
Where $\tau_{r,m}$ is a permutaion defined in (\ref{tau}).
Now, pre-composing both sides of (\ref{relation-required-for-proof}) by $\mu\circ((f(\alpha^{m+r-1})^{\otimes n}\otimes g(\alpha^{n+r-1})^{\otimes m})\otimes h(\alpha^{m+n-1})^{\otimes r})$
\begin{align*}
&\mu\circ((f(\alpha^{m+r-1})^{\otimes n}\otimes g(\alpha^{n+r-1})^{\otimes m})\otimes h(\alpha^{m+n-1})^{\otimes r})\circ (\rho_{n,m}\otimes  1_r) \circ \rho_{n+m,r}\\
& =\mu\circ((f(\alpha^{m+r-1})^{\otimes n}\otimes g(\alpha^{n+r-1})^{\otimes m})\otimes h(\alpha^{m+n-1})^{\otimes r})\circ (1_n\otimes \rho_{m,r})\circ \rho_{n,m+r}\\
& + (-1)^{rm} \mu\circ((f(\alpha^{m+r-1})^{\otimes n}\otimes g(\alpha^{n+r-1})^{\otimes m})\otimes h(\alpha^{m+n-1})^{\otimes r})\circ (1_n\otimes \tau_{r,m}) \circ \rho_{r,m}\circ \rho_{n,m+r}.
\end{align*}
This is same as 
\begin{align*}
&\mu\circ((f(\alpha^{m-1})^{\otimes n}\otimes g(\alpha^{n-1})^{\otimes m})(\alpha^r)^{\otimes m+n}\otimes h(\alpha^{m+n-1})^{\otimes r})\circ (\rho_{n,m}\otimes  1_r) \circ \rho_{n+m,r}\\
& =\mu\circ(f(\alpha^{m+r-1})^{\otimes n}\otimes (g(\alpha^{r-1})^{\otimes m}\otimes h(\alpha^{m-1})^{\otimes r})(\alpha^{n})^{\otimes m+r}\circ (1_n\otimes \rho_{m,r})\circ \rho_{n,m+r}\\
& + (-1)^{rm} \mu\circ(f(\alpha^{m+r-1})^{\otimes n}\otimes (g(\alpha^{r-1})^{\otimes m}\otimes h(\alpha^{m-1})^{\otimes r})(\alpha^{n})^{\otimes m+r}\circ (1_n\otimes \tau_{r,m}) \circ \rho_{r,m}\circ \rho_{n,m+r}.
\end{align*}
This implies
\begin{align*}
&\mu\circ(\alpha_1(f(\alpha^{m-1})^{\otimes n}\otimes g(\alpha^{n-1})^{\otimes m})(\alpha^{r-1})^{\otimes m+n}\otimes h(\alpha^{m+n-1})^{\otimes r})\circ (\rho_{n,m}\otimes  1_r) \circ \rho_{n+m,r}\\
& =\mu\circ(f(\alpha^{m+r-1})^{\otimes n}\otimes \alpha_1(g(\alpha^{r-1})^{\otimes m}\otimes h(\alpha^{m-1})^{\otimes r})(\alpha^{n-1})^{\otimes m+r}\circ (1_n\otimes \rho_{m,r})\circ \rho_{n,m+r}\\
& + (-1)^{rm} \mu\circ(f(\alpha^{m+r-1})^{\otimes n}\otimes \alpha_1(g(\alpha^{r-1})^{\otimes m}\otimes h(\alpha^{m-1})^{\otimes r})(\alpha^{n-1})^{\otimes m+r}\circ (1_n\otimes \tau_{r,m}) \circ \rho_{r,m}\circ \rho_{n,m+r}.
\end{align*}
Thus,
\begin{align*}
\alpha_1(f\cup g)\cup h= f\cup\alpha_1(g\cup h)+(-1)^{|g||h|}f\cup\alpha_1(h\cup g).
\end{align*}
\end{proof}

\section{A simple computation}
We illustrate our theory by giving a simple computation which is based on the Example \ref{exam fd} with coefficients in a Hom-associative and commutative algebra given in the Example \ref{hom ass exam}. Here we first recall Examples \ref{exam fd} and \ref{hom ass exam}.

Let $L$ be a two dimensional $\mathbb{C}$-vector space with basis $\lbrace e_1, e_2 \rbrace$. Then $L$ is a Hom-Leibniz algebra with respect to the following bracket and endomorphisms:
$$[e_2, e_2] = e_1~~~\text{and zero elsewhere.}$$
The endomorphism $\alpha$ is given by
\begin{align*}
& \alpha(e_1)= e_1,\\
& \alpha (e_2) =e_1 + e_2.
\end{align*}
Let $A$ be a two dimensional vector space with basis $\lbrace a_1, a_2 \rbrace$. The multiplication $\mu$ of the Hom-associative and commutative algebra $A$ is defined as:
$$ \mu (a_i, a_j) = \begin{cases} a_1 & \mbox{ if } (i,j) = (1,1)\\
a_2 & \mbox{ if } (i,j) \neq (1,1).
\end{cases} $$
The endomorphism $\alpha_1 :A \rightarrow A$ is defined by $\alpha_1 (a_1) = a_1 - a_2$ and $\alpha_1 (a_2) = 0$.

We have the following chain complex:
$$CL_\sharp(L): \cdots \rightarrow L^{\otimes n}\stackrel{d_n}{\rightarrow} L^{\otimes (n-1)}\stackrel{d_{n-1}}{\rightarrow}\cdots \stackrel{d_3}{\rightarrow} L^{\otimes 2} \stackrel{d_2}{\rightarrow} L .$$

We write boundary maps $d_n$ for $n\leq 3$,
\begin{align*}
& d_n =0~~~\text{for}~~~n \leq 1,~~~d_2(x_1, x_2)= [x_1, x_2],\\
& d_3 (x_1, x_2, x_3) = - ([x_1, x_2], \alpha(x_3)) + ([x_1, x_3], \alpha(x_2)) + (\alpha(x_1), [x_2, x_3]).
\end{align*} 
To understand the boundary maps $d_2, d_3$, it is enough to write down boundary maps on the basis elements. Here we write only non-zero cases.
\begin{align*}
& d_2(e_2, e_2) = [e_2, e_2] = e_1,\\
& d_3 (e_1, e_2, e_2) = (\alpha(e_1), [e_2, e_2])= (e_1, e_1),\\
& d_3 (e_2, e_2, e_1) = -([e_2, e_2], \alpha(e_1))= - (e_1, e_1),\\
& d_3 (e_2, e_2, e_2) = (\alpha(e_2), [e_2, e_2])= (e_1 + e_2, e_1).
\end{align*}
We define a map $f : L \to A$ as $f(e_2) = a_2-a_1,~~~f(e_1)=0.$

We need to check that $\alpha_1 \circ f = f \circ \alpha$. 
\begin{align*}
& (\alpha_1 \circ f)(e_1) = \alpha_1(f(e_1))= \alpha_1(0)=0,\\
& (f\circ \alpha)(e_1) = f (\alpha(e_1)) = f(e_1)= 0,\\
& (\alpha_1 \circ f)(e_2) = \alpha_1(f(e_2))= \alpha_1(a_2 - a_1)= a_2-a_1,\\
& (f\circ \alpha)(e_2) = f (\alpha(e_2)) = f(e_1+ e_2)=f(e_2)= a_2 -a_1.
\end{align*}
Thus, $f \in CL^1_{\alpha, \alpha_1}(L,~A)$.
Now, 
$$f\cup f= \mu \circ (f\otimes f) \circ \rho_{1,1} \in CL^2_{\alpha, \alpha_1}(L,~A).$$
Note that
\begin{align*}
&\rho_{1, 1}(e_1, e_1) =~ \rho_{1,1}(e_2, e_2)=0,\\
&\rho_{1, 1}(e_1, e_2) = (e_1, e_2),\\
&\rho_{1,1} (e_2, e_1) = (e_2, e_1).
\end{align*}
One can check easily from the definition of the cup-product that
\begin{align*}
& (f\cup f) (e_1, e_1) =0,\\
& (f\cup f) (e_1, e_2) =0,\\
& (f\cup f) (e_2, e_1) = 0,\\
& (f\cup f) (e_2, e_2) =0.
\end{align*} 
Thus, $$f \cup f = 0.$$
\begin{remark}
It may be remarked that although cohomology operations are well understood in topology and these operations are useful in distingushing spaces, however these operations have not been explored well in the context of cohomology of algebras. Having a cup-product is the first step towards it. In a future project we intend to address this problem in general, and hence,  cup-product in cohomology of Leibniz algebras or Hom Leibniz algebras are being studied at present. Further, any new algebraic structure on cohomology of a type of algebras helps in the classification problem of the algebras of a given type. This is one of the motivation behind introducing cup-product.
\end{remark}

{\bf Acknowledgements:} 
The author express his gratitute to the esteemed referee for her/his useful comments and suggestions on the earlier version of the manuscript that have improved the exposition. The author would like to thank Prof. Goutam Mukherjee of Indian Statistical Institute, Kolkata, for his guidance and reading a draft version of this article. The author is also thankful to Apurba Das and Surojit Ghosh for their help and useful comments on this article.

\end{document}